\theoremstyle{plain}
\newtheorem{theorem}{Theorem}[section]
\newtheorem{proposition}[theorem]{Proposition}
\theoremstyle{definition}
\theoremstyle{remark}
\numberwithin{equation}{section}
\newcommand{\Ric}{\mathrm{Ric}}
\begin{document}
\title{Bounding $\lambda_{2}$ for K\"ahler-Einstein metrics with large symmetry groups}
\begin{abstract}
We calculate an upper bound for the second non-zero eigenvalue of the scalar Laplacian, $\lambda_{2}$, for toric K\"ahler-Einstein metrics in terms of the polytope data. We also give a similar upper bound for Koiso-Sakane type K\"ahler-Einstein metrics. We provide some detailed examples in complex dimensions 1, 2 and 3.
\end{abstract}
\author{Stuart J. Hall}

\address{Department of Applied Computing, University of Buckingham, Hunter St., Buckingham, MK18 1G, U.K.} 
\email{stuart.hall@buckingham.ac.uk}

\author{Thomas Murphy}

\address{Department of Mathematics, McMaster University, 1280 Main St. W., Hamilton ON, L8S 4M7, Canada}

\email{tmurphy@math.mcmaster.ca}

\maketitle
\section{Introduction}
The purpose of this article is to investigate the second non-zero eigenvalue (ignoring multiplicities) of the scalar Laplacian for a large class of K\"ahler-Einstein manifolds. There are many well-known results concerning the first non-zero eigenvalue of a Riemannian manifold, yet relatively little in the literature about the second eigenvalue. This is partly because unless the Riemannian metric has a very simple form, it is very difficult to calculate exact or even approximate eigenvalues.

The first class of manifolds we study in this article are known as toric-K\"ahler-Einstein manifolds.  They are in some sense the largest group of Fano K\"ahler-Einstein metrics that are currently known, thanks to an existence result of Wang  and Zhu \cite{WZ} (c.f. Theorem \ref{WZthm}). The first non-zero eigenvalue $\lambda_{1}$ on these manifolds is always $2\Lambda$, where $\Lambda$ is the Einstein constant defined by ${\Ric(g) =\Lambda g}$, hence $\lambda_{2}$ is the first geometrically interesting value of the spectrum in this case. The second class of manifolds we consider are Fano manifolds that admit a dense one-parameter family of hypersurfaces (each hypersurface is a principal $U(1)$-bundle over the product of some Fano K\"ahler-Einstein manifolds). Many of these manifolds admit a K\"ahler-Einstein metric due to an existence result of Sakane and Koiso \cite{Sak}, \cite{KoiSak}. As in the Wang-Zhu case, the first eigenvalue of these metrics is $2\Lambda$.

Unfortunately the metrics given by the Wang-Zhu theorem are not known explicitly in many cases and so calculating quantities of interest to physicists and mathematicians is impossible. In the last five years or so there have been huge advances in the use of numerical methods to approximate these metrics and calculate geometric quantities (\cite{Braun}, \cite{Donum}, \cite{DoranC}, \cite{HW}, \cite{Kel}).  For example on the complex surface $\mathbb{CP}^{2}\sharp 3 \overline{\mathbb{CP}}^{2}$ C. Doran et. al. obtained estimates for the first couple of $D_{6}$-invariant eigenvalues and eigenfunctions. However it is not clear that the numerical methods (which will work in theory) can be practically applied in higher-dimensions using current computational power.

We give a method for generating an estimate for $\lambda_{2}$ that could be calculated for any toric K\"ahler-Einstein metric without any explicit representation or approximation of the metric. The main theorem we prove is the following:
\begin{theorem}\label{main}
Let $(M,g,J)$ be an $n$-complex dimensional toric K\"ahler-Einstein manifold with Einstein constant $\Lambda$ and moment polytope $P\subset \mathbb{R}^{n}$. Let $E=\{x_{1},...,x_{n}\}$ be a set of coordinates on the polytope chosen so that
\begin{enumerate}
\item $$\int_{P} x_{i} \ dx =0,$$ for each $i=1,...n$.
\item $$\int_{P} x_{i}x_{j} \ dx = \delta_{ij},$$ for each $i,j = 1,...,n$.
\end{enumerate}
Given $(a_{1},...,a_{n}) \in \mathbb{R}^{n}/\{0\}$ we can form the quadratic function $${\phi^{2}=\left(\sum_{i=1}^{n}a_{i}x_{i}\right)^{2}}.$$ 
Let $\Phi$ is the projection of $\phi^{2}$ onto the $L^{2}$-orthonormal complement of $E\oplus \mathbb{R}$. Then the second eigenvalue of the Laplacian (counted without multiplicity), $\lambda_{2}$, satisfies the bound
$$\lambda_{2} \leq \frac{8\Lambda}{3}+\frac{2\Lambda}{3}\inf_{(a_{1},...,a_{n})\in \mathbb{R}^{n}/\{0\}}\left(\frac{\sum_{i=1}^{i=n}\langle x_{i},\phi^{2} \rangle^{2}_{L^{2}}+4\frac{\|\phi\|^{4}_{L^{2}}}{Vol(P)}}{\|\Phi\|^{2}_{L^{2}}}\right).$$
\end{theorem}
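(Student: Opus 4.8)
The plan is to combine the variational characterisation of $\lambda_{2}$ with the two structural facts recalled in the introduction: that $\lambda_{1}=2\Lambda$, and that (after identifying torus-invariant functions on $M$ with functions on $P$) the components of the moment map are first eigenfunctions. Because $\Delta$ commutes with the $T^{n}$-action it preserves the space of invariant functions, whose eigenvalues form a sub-list $0<2\Lambda<\lambda_{2}^{\mathrm{inv}}<\cdots$ of the full spectrum; since any eigenvalue exceeding $2\Lambda$ is at least $\lambda_{2}$, we have $\lambda_{2}\le\lambda_{2}^{\mathrm{inv}}$. It therefore suffices to exhibit one invariant test function, orthogonal to the constants and to the invariant first eigenspace, and to estimate its Rayleigh quotient; this is exactly what $\Phi$ will do.

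First I would record the inputs that make the computation close up. Writing $\phi=\sum_{i}a_{i}x_{i}$, each $x_{i}$ is a holomorphy potential for the torus action, so $\Delta x_{i}=2\Lambda x_{i}$ and hence $\Delta\phi=2\Lambda\phi$; moreover the invariant $2\Lambda$-eigenspace is exactly $\mathrm{span}\{x_{1},\dots,x_{n}\}$. Since the Duistermaat--Heckman measure is a constant multiple of Lebesgue measure on $P$, every inner product of invariant functions equals the corresponding polytope integral up to a single constant that cancels in all ratios, and the normalisations (1) and (2) make the projection of $\phi^{2}$ explicit:
$$\Phi=\phi^{2}-\sum_{i=1}^{n}\langle\phi^{2},x_{i}\rangle_{L^{2}}\,x_{i}-\frac{\|\phi\|^{2}_{L^{2}}}{Vol(P)}.$$
By construction $\Phi$ is invariant and orthogonal to the constants and to each $x_{i}$, hence to the whole invariant first eigenspace, so the min--max principle on the invariant subspace gives $\lambda_{2}\le\lambda_{2}^{\mathrm{inv}}\le\langle\Delta\Phi,\Phi\rangle_{L^{2}}/\|\Phi\|^{2}_{L^{2}}$. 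Pythagoras also yields $\|\Phi\|^{2}_{L^{2}}=\|\phi^{2}\|^{2}_{L^{2}}-\sum_{i}\langle\phi^{2},x_{i}\rangle^{2}_{L^{2}}-\|\phi\|^{4}_{L^{2}}/Vol(P)$, which I will use to eliminate $\|\phi^{2}\|^{2}_{L^{2}}$ at the end.

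The heart of the argument is to evaluate the Dirichlet energy $\langle\Delta\Phi,\Phi\rangle_{L^{2}}$ in closed form. Because $\Delta\Phi$ differs from $\Delta(\phi^{2})$ only by a combination of the $x_{i}$, to which $\Phi$ is orthogonal, one has $\langle\Delta\Phi,\Phi\rangle_{L^{2}}=\langle\Delta(\phi^{2}),\Phi\rangle_{L^{2}}$, and the pointwise identity $\Delta(\phi^{2})=4\Lambda\phi^{2}-2|\nabla\phi|^{2}$ reduces everything to the three moments $\langle\phi^{2},|\nabla\phi|^{2}\rangle_{L^{2}}$, $\langle x_{i},|\nabla\phi|^{2}\rangle_{L^{2}}$ and $\langle 1,|\nabla\phi|^{2}\rangle_{L^{2}}$ of $|\nabla\phi|^{2}$. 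Each follows from a single integration by parts on the closed manifold $M$ against $\Delta\phi=2\Lambda\phi$: testing the eigenfunction equation against $\phi^{3}$ gives the crucial cubic identity
$$\int_{M}\phi^{2}|\nabla\phi|^{2}\,dV=\frac{2\Lambda}{3}\int_{M}\phi^{4}\,dV,$$
while the general relation $\langle\psi,|\nabla\phi|^{2}\rangle_{L^{2}}=2\Lambda\langle\phi^{2},\psi\rangle_{L^{2}}-\tfrac12\langle\phi^{2},\Delta\psi\rangle_{L^{2}}$ taken at $\psi=x_{i}$ and $\psi=1$ gives $\langle x_{i},|\nabla\phi|^{2}\rangle_{L^{2}}=\Lambda\langle\phi^{2},x_{i}\rangle_{L^{2}}$ and $\langle 1,|\nabla\phi|^{2}\rangle_{L^{2}}=2\Lambda\|\phi\|^{2}_{L^{2}}$. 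Substituting these and clearing $\|\phi^{2}\|^{2}_{L^{2}}$ through the expression for $\|\Phi\|^{2}_{L^{2}}$ collapses the constant part to $\tfrac{8\Lambda}{3}\|\Phi\|^{2}_{L^{2}}$ and leaves exactly $\tfrac{2\Lambda}{3}\big(\sum_{i}\langle x_{i},\phi^{2}\rangle^{2}_{L^{2}}+4\|\phi\|^{4}_{L^{2}}/Vol(P)\big)$; dividing by $\|\Phi\|^{2}_{L^{2}}$ and taking the infimum over $(a_{1},\dots,a_{n})$ produces the stated bound.

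The step I expect to demand the most care is the spectral bookkeeping rather than the calculus. One must verify that the invariant $2\Lambda$-eigenspace is precisely $\mathrm{span}\{x_{i}\}$, so that orthogonality to $E\oplus\mathbb{R}$ genuinely places $\Phi$ above the first invariant eigenvalue, and that restricting to invariant functions can only raise the eigenvalue, so that the Rayleigh quotient bounds $\lambda_{2}$ itself and not some larger invariant eigenvalue. By contrast the analytic content is elementary: all the integrations by parts take place on the closed manifold $M$, so no boundary terms intervene, and the decisive cubic identity can be sanity-checked on $\mathbb{CP}^{1}$, where $\phi$ is the height function and both sides of it equal $8\pi/15$.
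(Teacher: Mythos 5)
Your proposal is correct and follows essentially the same route as the paper: the variational characterisation of $\lambda_{2}$ applied to the projection $\Phi$ of $\phi^{2}$, with the Rayleigh quotient evaluated by integrating by parts against $\Delta\phi=2\Lambda\phi$ (your cubic identity is exactly the paper's Proposition 3.2 with $p=1$, $q=3$, and your computation of $\langle\Delta\Phi,\Phi\rangle_{L^{2}}$ via $\Delta(\phi^{2})=4\Lambda\phi^{2}-2\lvert\nabla\phi\rvert^{2}$ is an equivalent repackaging of the paper's direct expansion of $\|\nabla\Phi\|^{2}_{L^{2}}$). Your extra care with the equivariant reduction --- checking that an invariant test function orthogonal to $\mathrm{span}\{x_{i}\}\oplus\mathbb{R}$ really bounds $\lambda_{2}$ even when the full first eigenspace is larger than its invariant part --- is a point the paper passes over more quickly, and is handled correctly.
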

Though the bound is rather unwieldy, the main utility is that the integrals appearing in it can all be easily calculated.  We do this for the toric Fano Kahler-Einstein surfaces and for the threefold  $\mathbb{P}(\mathcal{O}\oplus \mathcal{O}(-1,1))_{\mathbb{CP}^{1}\times \mathbb{CP}^{1}}$.  We also give an actual value (where this is known) or an estimate in the case of $\mathbb{CP}^{2}\sharp3 \overline{\mathbb{CP}}^{2}$ where the metric can only be approximated. We also give a numerical estimate for the threefold  $\mathbb{P}(\mathcal{O}\oplus \mathcal{O}(-1,1))_{\mathbb{CP}^{1}\times \mathbb{CP}^{1}}$ where the K\"ahler-Einstein metric can be explicitly described but the eigenvalues of the Laplacian are not analytically known. The more precise values are of interest in their own right as they do not appear to have been computed in the literature already.  The numerical results are best summarised in the following table (here each of the metrics is normalised so that the Einstein constant $\Lambda=1$):
\begin{center}
\begin{table}[h]
\begin{tabular}{|c|c|c|} 
\hline
\textbf{Manifold} & \textbf{Upper Bound} & \textbf{Actual Value} \\
\hline
$\mathbb{CP}^{1}$ & 6 & 6\\
\hline
$\mathbb{CP}^{2}$ & $16/3$ & $16/3$\\
\hline
$\mathbb{CP}^{1} \times \mathbb{CP}^{1} $ & $ 32/7 $ & $4$\\ 
\hline
$\mathbb{CP}^{2}\sharp 3 \overline{\mathbb{CP}}^{2}$ & $672/127 \approx 5.29$ & $\approx 4.75$\\  
\hline
$\mathbb{P}(\mathcal{O}\oplus \mathcal{O}(1,-1))$ & $\approx 4.70$ & $\approx 4.34$\\
\hline
\end{tabular}
\caption{Estimates and exact values of $\lambda_{2}$}
\end{table}
\end{center}
Here the actual value also refers to the actual value of the second torus invariant non-zero eigenvalue.  This might be strictly greater than the value over all functions, see for example \cite{AbrFr}.  It would be interesting to know if this is actually the case.

The second main result is the following (all notation is explained in section 5): 
\begin{theorem}\label{main2}
Let $W_{q_{1},q_{2},...,q_{r}}$ be a Koiso-Sakane K\"ahler-Einstein manifold determined by the tuples $(p_{1},p_{2},...,p_{r})$, $(n_{1},n_{2},...,n_{r})$ and Einstein constant $\Lambda$. Then the second non-zero eigenvalue $\lambda_{2}$ (counted without multiplicity) satisfies
$$\lambda_{2} \leq \frac{8\Lambda}{3}+ \frac{2\Lambda}{3}\left(\frac{ \frac{I_{3}^{2}}{I_{2}^{2}}+\frac{4}{I_{0}}}{\frac{I_{4}}{I_{2}^{2}}-\frac{I_{3}^{2}}{I_{2}^{2}}-\frac{1}{I_{0}}}\right),$$
where
$$I_{k} = \int_{-(n_{1}+1)}^{(n_{r}+1)}x^{k}\prod_{i=1}^{i=r}\left|\frac{p_{i}}{q_{i}}-x\right|^{n_{i}}dx.$$
\end{theorem}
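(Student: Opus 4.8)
The plan is to apply the variational method already developed for Theorem~\ref{main}, specialised to the one remaining invariant direction of a Koiso--Sakane metric. Throughout I use the nonnegative Laplacian, so that the first nonzero eigenvalue is $\lambda_1=2\Lambda$ and its eigenspace consists of the holomorphy potentials. Among these, the only one invariant under the structure group of $W_{q_1,\dots,q_r}$ is an affine normalisation of the moment-map coordinate $x$ of the Hamiltonian $U(1)$-action; I would take $\phi$ to be this eigenfunction, normalised so that $\int_M\phi\,dV=0$, whence $\Delta\phi=2\Lambda\phi$. The test function is then $\Phi$, the $L^2$-projection of $\phi^2$ onto the orthogonal complement of $\mathrm{span}\{1,\phi\}$, and the min-max principle gives $\lambda_2\le\langle\Delta\Phi,\Phi\rangle/\norm{\Phi}^2$.

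The engine of the estimate is a short list of integration-by-parts identities valid on any closed manifold once $\Delta\phi=2\Lambda\phi$. From $\Delta(\phi^2)=4\Lambda\phi^2-2\abs{\nabla\phi}^2$ one obtains $\int_M\abs{\nabla\phi}^2=2\Lambda\norm{\phi}^2$ and, pairing $\nabla\phi$ against $\nabla\psi$ for any $2\Lambda$-eigenfunction $\psi$, the relation $\int_M\abs{\nabla\phi}^2\psi=\Lambda\langle\phi^2,\psi\rangle$. The decisive one is the cubic identity $\int_M\phi^2\abs{\nabla\phi}^2=\tfrac{2\Lambda}{3}\int_M\phi^4$, obtained by testing $\Delta\phi=2\Lambda\phi$ against $\phi^3$ and using $\nabla\phi^3=3\phi^2\nabla\phi$; the factor $\tfrac13$ here is precisely what produces the coefficients $\tfrac{8\Lambda}{3}$ and $\tfrac{2\Lambda}{3}$ in the final bound. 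Feeding these into $\langle\Delta\Phi,\Phi\rangle$ and expressing everything through $\norm{\phi}^2$, $\langle\phi^2,\phi\rangle$, $\norm{\phi^2}^2$ and $\mathrm{Vol}(M)$ reproduces the master inequality behind Theorem~\ref{main}; since the symmetry leaves a single eigendirection, there is no family $(a_1,\dots,a_n)$ and the infimum degenerates to one term.

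It then remains to convert these $L^2(M)$ quantities into the one-dimensional moments $I_k$. Here I invoke the defining fibration of $W_{q_1,\dots,q_r}$: pushing the Riemannian volume forward under $x$ yields a measure proportional to $\prod_{i=1}^{r}\abs{p_i/q_i-x}^{n_i}\,dx$ on $[-(n_1+1),n_r+1]$, so that $\int_MF(x)\,dV$ is a constant multiple of $\int F\,d\mu$ for every function of $x$. Under this dictionary $\norm{\phi}^2,\ \langle\phi^2,\phi\rangle,\ \norm{\phi^2}^2$ and $\mathrm{Vol}(M)$ become (proportional to) $I_2,\ I_3,\ I_4$ and $I_0$; the common constant cancels in the scale-invariant quotient, and simplifying the resulting rational expression gives the stated bound.

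The genuine difficulty is this third step, not the algebra. I must first confirm that $x$ is an honest eigenfunction of the full scalar Laplacian with eigenvalue $2\Lambda$ (equivalently, that it is the holomorphy potential of the circle action) and fix its additive normalisation so that $\int_M\phi\,dV=0$; then establish the pushforward formula, i.e.\ compute the volume of the generic fibre as the weight $\prod_i\abs{p_i/q_i-x}^{n_i}$ and identify the endpoints $-(n_1+1)$ and $n_r+1$. A subtler point is that min-max bounds $\lambda_2$ only when $\Phi$ is orthogonal to the \emph{entire} first eigenspace, which is generally larger than $\mathrm{span}\{\phi\}$; because $\Phi$ depends on $x$ alone, orthogonality to the non-invariant holomorphy potentials follows by averaging over the structure group, and it is this reduction that must be set up carefully.
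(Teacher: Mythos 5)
Your proposal is correct and follows essentially the same route as the paper, whose entire proof is the single sentence that one projects $s^{2}$ onto the orthogonal complement of $\{1,s\}$ and evaluates the Rayleigh quotient via the integration-by-parts formula of Proposition \ref{IBP}. You in fact supply more detail than the paper does — in particular the pushforward of the volume measure to $\prod_{i}\lvert p_{i}/q_{i}-x\rvert^{n_{i}}\,dx$ and the averaging argument showing $\Phi$ is orthogonal to the full first eigenspace — both of which are genuinely needed and left implicit in the original.
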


The results provide the foundation for further investigation into the spectral gap (i.e. the ratio) of the first two non-zero eigenvalues. In particular there is evidence for the following: 

\emph{Conjecture:} Let $(M^{n},g,J)$ be a toric K\"ahler-Einstein manifold with Einstein constant $1$. Then $${\lambda_{2}(M) \leq \lambda_{2}(\mathbb{CP}^{n})},$$
where $\lambda_{2}(\mathbb{CP}^{n})$ denotes the second eigenvalue of the Fubini-Study metric normalised to have Einstein constant 1.

The remainder of the paper is as follows. In section 2 we give some background on toric-K\"ahler manifolds. In section 3 we prove Theorem \ref{main}. In section 4 we calculate the specific examples in Table 1. Finally in section 5 we speculate on some theoretical developments in this area and possible applications to other geometries.

\emph{Acknowledgements} We would like to thank Emily Dryden for useful comments and suggestions.  SH would like to thank Karl Sternberg and Richard Jarman for their hospitality whilst much of this paper was written. We would like to thank the anonymous referee for useful comments and corrections. This work was supported by a Dennison research grant from the University of Buckingham. TM  was supported by an ARC grant. 
 
\section{Toric-K\"ahler manifolds}
Here we give a brief overview of toric-K\"ahler manifolds. The approach we describe was developed by V. Guillemin \cite{Gui} and  M. Abreu \cite{Abr}.  A very good account is also contained in the paper by C. Doran et al. \cite{DoranC}.
\subsection{Background theory}
The class of manifold we are interested in are known as toric-K\"ahler manifolds.  A K\"ahler structure on a manifold is a complex structure $J$ and a $J$-invariant metric $g$ with the condition that the associate $2$-form $\omega=g(J\cdot,\cdot)$ is closed. It is a remarkable fact that, given this data, the metric $g$ can be locally represented in complex coordinates by
$$g_{i\bar{j}}=\frac{\partial ^{2}f}{\partial z_{i}\partial \bar{z}_{j}}$$
where $f$ is a real-valued function.

A toric-K\"ahler manifold $M$ is defined as an $n$-complex dimensional K\"ahler manifold $(M^{n},J,g)$ with an open dense subset $M^{\circ}\subset M$ on which the real $n$-torus $\mathbb{T}^{n}=U(1)^{n}$ acts freely and holomorphically. There are natural complex coordinates on $M^{\circ}$ in this situation, $z=u+\sqrt{-1}\theta$ where $u\in\mathbb{R}^{n}$ $\theta\in\mathbb{T}^{n}$. The torus action then rotates the $\theta$ component and leaves the $u$ fixed. In these coordinates the metric has the form
$$g_{i\bar{j}}=F_{ij}(du_{i}du_{j}+d\theta_{i}d\theta_{j})$$
where $F:\mathbb{R}^{n}\rightarrow \mathbb{R}$ is a convex function. The convex function $F$ induces a change of coordinates given by
$$x=\frac{\partial F}{\partial u}.$$
The map $u\rightarrow \nabla F$ is known as the moment map and the image $P^{\circ}$ of $\mathbb{R}^{n}$ under this map is the interior of a convex polytope $P$.  This polytope $P$ is known as the \emph{moment polytope} of the toric-K\"ahler manifold $M$ and it can be described as the intersection of linear inequalities of the form
$$l_{k}(x)=v_{k}\cdot x+c_{k}\geq 0,$$
where $v_{k}$ is a primitive element of $\mathbb{Z}^{n}$ and $c_{k}\in \mathbb{R}$. The coordinates on the polytope are known as \emph{symplectic coordinates}.  The polytopes that arise in this context are called Delzant polytopes and satisfy the condition that there are $n$ codimension-one faces meeting at each vertex and that the $n$ vectors $v_{k}$ form a basis for $\mathbb{R}^{n}$.

The K\"ahler potential $F$ transforms under the Legendre transform to a function called the \emph{symplectic potential}
$$\psi(x) = u \cdot x - F(u).$$
If we denote by $\psi_{ij} = \frac{\partial^{2}\psi}{\partial x_{i} \partial x_{j}}$ then the metric in symplectic coordinates is then given by
$$g_{ij} =\psi_{ij}dx_{i}dx_{j} +\psi^{ij}d\theta_{i}d\theta_{j},$$
where $\psi^{ij}$ is the inverse of $\psi_{ij}$. Guillemin \cite{Gui} showed that one can always write the symplectic potential as
$$\psi(x) = \sum_{k}l_{k}(x) \log l_{k}(x) + h(x),$$
where $h(x)$ is a smooth function on $P$. We call the function
$$\psi(x)_{can}=\sum_{k}l_{k}(x) \log l_{k}(x),$$
the \emph{canonical symplectic potential}.
\subsection{Toric-K\"ahler-Einstein Metrics}
We are interested in K\"ahler-Einstein metrics with positive Einstein constant, i.e. K\"ahler metrics solving
$$\Ric(g) =\Lambda g$$
with $\Lambda>0$.  These can only occur if the underlying complex manifold is Fano (i.e. has ample anti-canonical bundle).  There are a number of ways of testing whether a toric-K\"ahler metric is Fano by looking at the polytope. One useful characterisation is that there is an affine change of coordinates in which the constants $c_{k}$ in the defining affine linear functions are all equal. Another useful way of looking at this is that the polytope has a preferred center of mass at the origin. We shall henceforth assume that the symplectic coordinates have been chosen so that this is the case. Given a Fano manifold it is not always true that there exists a K\"ahler-Einstein metric. However on toric-K\"ahler Fano manifolds, the only obstruction is essentially classical.\begin{theorem}[Wang-Zhu, \cite{WZ}]\label{WZthm}
Let $(M,g,J)$ be a Fano toric-K\"ahler manifold. Then $M$ has a K\"ahler-Einstein metric, unique up to automorphisms, if and only if the Futaki-invariant vanishes.
\end{theorem}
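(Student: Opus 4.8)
The statement asserts an equivalence, so there are two directions, and the hard one is sufficiency. The necessity of the vanishing Futaki invariant I would not reprove from scratch: it is Futaki's classical obstruction, and in the toric case the Futaki character is a linear functional on $\mathrm{Lie}(\mathbb{T}^{n})$ which, after the Fano normalisation placing the distinguished centre at the origin, vanishes precisely when the barycentre of the moment polytope satisfies $\int_{P} x \ dx =0$. Thus the genuine content is that this single convex-geometric condition forces existence, together with uniqueness.

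For existence I would exploit the torus symmetry to reduce the complex Monge-Amp\`ere equation to a real one. Seeking a $\mathbb{T}^{n}$-invariant solution means the K\"ahler potential is a convex function $F=F(u)$ on $\RR^{n}$, and the equation $\Ric(g)=\Lambda g$ collapses to a real Monge-Amp\`ere equation of the form
$$\log\det\left(\Hess_{u} F\right) = -\Lambda F + a\cdot u + b,$$
where the affine term $a\cdot u+b$ records the equivariant trivialisation of the anticanonical bundle. The vector $a$ is pinned down by a modified Futaki condition, and the vanishing of the ordinary Futaki invariant---equivalently $\int_{P}x\ dx=0$---is exactly what permits $a=0$, leaving a genuine real K\"ahler-Einstein equation on $\RR^{n}$.

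The plan is then a continuity method. I would start at the reference metric coming from Guillemin's canonical symplectic potential, interpolate to the equation above, and prove that the set of solvable parameters is open and closed. Openness is the routine half, via the implicit function theorem: the linearised operator differs from the Laplacian of $g_{t}$ by a zeroth-order constant, and it remains invertible for $t<1$ because the positive Ricci lower bound maintained along the path keeps the first nonzero eigenvalue of $\Lap_{g_{t}}$ strictly separated from that constant (the Lichnerowicz-Bochner estimate). The crux, and the step I expect to be the main obstacle, is closedness, which reduces to a uniform $C^{0}$ bound on the oscillation of the potential $F_{t}$. This is where the hypothesis is consumed: I would establish a Moser-Trudinger type inequality for $\mathbb{T}^{n}$-invariant functions in which the potentially unbounded linear term in the energy is a linear functional of $\int_{P}x\ dx$; the barycentre condition annihilates it, giving a uniform bound on the relevant energy functional and hence on $\sup F_{t}-\inf F_{t}$. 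Because everything has been pushed onto the fixed polytope $P$, this estimate is pure convex analysis, which is what makes it tractable in the toric setting. Once the $C^{0}$ estimate is in hand the higher-order estimates are standard---Yau's $C^{2}$ estimate from the equation and the Ricci lower bound, Evans-Krylov for $C^{2,\alpha}$, then elliptic bootstrapping to $C^{\infty}$---and the continuity method closes up to produce the metric.

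Finally, uniqueness up to automorphisms I would deduce from the Bando-Mabuchi theorem, namely that K\"ahler-Einstein metrics in a fixed Fano class are unique modulo the identity component of the automorphism group; this follows from convexity of the Mabuchi K-energy along geodesics in the space of K\"ahler potentials. In the toric category this convexity is transparent, as geodesics correspond to affine paths of symplectic potentials on $P$ and the K-energy is a convex functional of them, so the uniqueness statement can be read off directly from the convex-geometric reformulation rather than imported wholesale.
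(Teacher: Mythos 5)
The paper does not prove this statement: it is quoted verbatim as an external input from Wang--Zhu \cite{WZ}, so there is no internal argument to compare yours against. Judged on its own terms, your outline is a faithful description of how results of this type are established, and every structural ingredient you name (Futaki's obstruction giving necessity and reducing to the barycentre condition $\int_P x\,dx=0$, the reduction by $\mathbb{T}^n$-invariance to a real Monge--Amp\`ere equation on $\mathbb{R}^n$, a continuity method whose openness is the Lichnerowicz--Bochner eigenvalue argument, Bando--Mabuchi for uniqueness) is correct and is indeed what the literature does. Two caveats are worth recording. First, Wang--Zhu actually prove the more general K\"ahler--Ricci soliton statement, and their uniform $C^0$ estimate is obtained not via a Moser--Trudinger inequality but by directly locating the minimum point of the convex potential $w_t$ on $\mathbb{R}^n$ and showing, using the (weighted) barycentre condition on $P$, that it cannot escape to infinity; the properness/Moser--Trudinger route you describe is closer to later treatments (Tian's general framework, and Donaldson's and Zhou--Zhu's toric refinements) and is a legitimate alternative, but it is not the argument of \cite{WZ}. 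Second, as a standalone proof your text has a genuine gap exactly where you say the difficulty lies: the assertion that ``the barycentre condition annihilates'' the unbounded term in the energy, yielding the uniform oscillation bound, is the entire analytic content of the theorem and is stated rather than established. As an attribution of strategy the proposal is sound; as a proof it stops at the crux.
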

We shall not expand on the theory of the Futaki invariant, apart from saying that for a Fano toric-K\"ahler manifold it vanishes if, and only if, the centre of mass of the polytope $P$ and its boundary $\partial P$ coincide. This guarantees one can choose coordinates on the polytope satisfying condition (1) in the hypotheses of Theorem \ref{main}.  The Wang-Zhu theorem is far more general than the versions we have stated above.  In the case where the Futaki invariant does not vanish, they show that the manifold admits a K\"ahler-Ricci soliton, which is a generalisation of a K\"ahler-Einstein metric.

The first equivariant eigenspace of a toric K\"ahler-Einstein manifold is easy to calculate. The following fact is well-known and is a manifestation of the classical Matsushima theorem \cite{Mat}. One can deduce it from Theorem 11.52 (page 330) in \cite{Be}.
\begin{proposition}\label{Mat}
Let $(M,g,J)$ be a toric K\"ahler-Einstein metric with Einstein constant $\Lambda >0$, then the first eigenvalue of the scalar Laplacian is $2\Lambda$ and the torus-equivariant eigenfunctions are affine linear functions in the polytope coordinates.
\end{proposition}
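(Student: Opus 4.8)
The plan is to deduce both assertions from the Kähler–Einstein refinement of the Lichnerowicz eigenvalue estimate, which is precisely the content of Besse's Theorem 11.52. That result asserts that on a compact Kähler–Einstein manifold with $\Ric(g)=\Lambda g$ and $\Lambda>0$ the first non-zero eigenvalue satisfies $\lambda_{1}\geq 2\Lambda$, with equality if and only if $M$ carries a non-trivial holomorphic vector field; moreover the $2\Lambda$-eigenspace is isomorphic to the space $\mathfrak{h}$ of holomorphic vector fields via $f\mapsto \nabla^{1,0}f$, so that the $2\Lambda$-eigenfunctions are exactly the mean-zero holomorphy potentials. Granting this, the proposition reduces to two things: (a) producing enough holomorphic vector fields to force equality, and (b) matching the relevant potentials with the polytope coordinates.

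For (a) I would use that the $\mathbb{T}^{n}$-action is by holomorphic isometries on $M^{\circ}$, so its infinitesimal generators $X_{1},\dots,X_{n}$ are real-holomorphic Killing fields and the complex fields $Z_{i}=X_{i}-\sqrt{-1}\,JX_{i}$ are holomorphic and $\mathbb{C}$-linearly independent. Hence $\mathfrak{h}\neq 0$, and Besse's theorem forces $\lambda_{1}=2\Lambda$, which is the first claim.

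For (b) I would use that the moment map exhibits each coordinate $x_{i}$, viewed as a function on $M$ that is constant on $\mathbb{T}^{n}$-orbits, as the Hamiltonian of $X_{i}$, i.e. $\iota_{X_{i}}\omega=dx_{i}$. The key computation, and the main technical point, is the standard Bochner/Matsushima identity that on a Kähler–Einstein manifold the Hamiltonian of a holomorphic Killing field, once normalised to have zero mean, satisfies $\Delta x_{i}=2\Lambda x_{i}$; equivalently $x_{i}$ is exactly the real holomorphy potential of $Z_{i}$ appearing in Besse's isomorphism. The mean-zero normalisation is automatic: by the Duistermaat–Heckman theorem the push-forward of the Liouville volume under the moment map is Lebesgue measure on $P$, so $\int_{M}x_{i}\,dV_{g}=\int_{P}x_{i}\,dx=0$ by the centering of the polytope guaranteed by the vanishing of the Futaki invariant (hypothesis (1) of Theorem \ref{main}). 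Thus the $x_{i}$ are genuine $2\Lambda$-eigenfunctions.

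Finally I would restrict Besse's isomorphism to the $\mathbb{T}^{n}$-invariant part of the spectrum, since these are the functions represented on the polytope; the invariant $2\Lambda$-eigenfunctions correspond to the holomorphic fields commuting with the torus, which are spanned by the $Z_{i}$, and whose potentials are spanned by $x_{1},\dots,x_{n}$ together with the constants. As the $x_{i}$ are tautologically affine-linear in the polytope coordinates, this identifies the torus-equivariant eigenfunctions as precisely the affine-linear functions, completing the proof. The only step requiring genuine care is the identity $\Delta x_{i}=2\Lambda x_{i}$, including the correct sign and factor-of-two conventions for $\Delta$ and for $\nabla^{1,0}$; everything else is bookkeeping once Theorem 11.52 of \cite{Be} and the classical Matsushima theorem \cite{Mat} are invoked.
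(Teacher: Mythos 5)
Your proposal is correct and follows exactly the route the paper indicates: the paper offers no proof of its own, stating only that the proposition follows from the classical Matsushima theorem and Theorem 11.52 of Besse, and your argument is precisely a careful unpacking of that citation (holomorphic fields from the torus force equality in the Lichnerowicz--Matsushima bound, and the moment-map coordinates are the mean-zero holomorphy potentials). The details you supply — the Hamiltonian identity $\Delta x_{i}=2\Lambda x_{i}$, the pushforward of the volume to Lebesgue measure on $P$, and the restriction to torus-invariant potentials — are all sound.
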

 
If the automorphism group is strictly larger than the maximal torus, the first eigenspace will contain eigenfunctions other than the torus-equivariant ones.

\section{The proof of theorem \ref{main}}
In order to obtain our bound we use the characterisation of the second eigenvalue given by
\begin{equation}\label{evquot}
\lambda_{2} = \inf_{\eta \in (E_{1}\oplus \mathbb{R})^{\perp}}\frac{\|\nabla \eta\|^{2}_{L^{2}}}{\|\eta\|^{2}_{L^{2}}},
\end{equation}
where $E_{1}$ is the first eigenspace. Proposition \ref{Mat} gives an extremely concrete discription of the first equivariant eigenspace of a toric-K\"ahler-Einstein manifold. We can integrate powers of first eigenvalues appearing in the quotient (\ref{evquot}) using the following proposition.
\begin{proposition}\label{IBP}
Let $(M,g)$ be a closed Riemannian manifold and let ${\eta \in C^{\infty}(M)}$ be a smooth function. Then for $p,q$ such that $p+q \neq 1$
$$\int_{M}\langle \nabla \eta^{p}, \nabla \eta^{q}\rangle dV_{g} = \frac{pq}{p+q-1}\int_{M}\eta^{p+q-1}\Delta \eta dV_{g}.$$
Hence if one takes $\eta$ an eigenfunction of the Laplacian with eigenvalue $2\Lambda$ we have
$$\int_{M}\langle \nabla \eta^{p}, \nabla \eta^{q}\rangle dV_{g} = \frac{2\Lambda pq}{p+q-1}\int_{M}\eta^{p+q} dV_{g}.$$ 
\end{proposition}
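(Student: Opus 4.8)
The plan is to reduce the identity to a single integration by parts after applying the chain rule. First I would use $\nabla \eta^{p} = p\,\eta^{p-1}\nabla\eta$ and $\nabla\eta^{q}=q\,\eta^{q-1}\nabla\eta$, which are valid wherever the relevant powers are smooth (for instance when $p,q$ are positive integers, or when $\eta$ is nowhere vanishing). Taking the pointwise inner product gives
$$\langle \nabla\eta^{p},\nabla\eta^{q}\rangle = pq\,\eta^{p+q-2}\abs{\nabla\eta}^{2},$$
so that the left-hand side of the claimed identity equals $pq\int_{M}\eta^{p+q-2}\abs{\nabla\eta}^{2}\,dV_{g}$.

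The key step is then to rewrite the quantity $\int_{M}\eta^{p+q-2}\abs{\nabla\eta}^{2}\,dV_{g}$ using Green's formula on the closed manifold $M$. Writing $\Delta = -\div\nabla$, integration by parts gives $\int_{M}\langle\nabla f,\nabla h\rangle\,dV_{g}=\int_{M}f\,\Delta h\,dV_{g}$ with no boundary term. I would apply this with $f=\eta^{p+q-1}$ and $h=\eta$. Since $\nabla\eta^{p+q-1}=(p+q-1)\eta^{p+q-2}\nabla\eta$, the left-hand side becomes $(p+q-1)\int_{M}\eta^{p+q-2}\abs{\nabla\eta}^{2}\,dV_{g}$, while the right-hand side is $\int_{M}\eta^{p+q-1}\Delta\eta\,dV_{g}$. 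The hypothesis $p+q\neq 1$ is exactly what allows us to divide by $p+q-1$, yielding
$$\int_{M}\eta^{p+q-2}\abs{\nabla\eta}^{2}\,dV_{g}=\frac{1}{p+q-1}\int_{M}\eta^{p+q-1}\Delta\eta\,dV_{g}.$$
Combining this with the first display gives the first assertion.

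For the second assertion I would simply substitute: if $\eta$ is an eigenfunction with $\Delta\eta=2\Lambda\eta$, then $\eta^{p+q-1}\Delta\eta=2\Lambda\,\eta^{p+q}$, and the first identity immediately becomes
$$\int_{M}\langle\nabla\eta^{p},\nabla\eta^{q}\rangle\,dV_{g}=\frac{2\Lambda pq}{p+q-1}\int_{M}\eta^{p+q}\,dV_{g}.$$

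I do not anticipate a serious obstacle: the computation is a one-line chain rule followed by a single integration by parts, and the sign convention $\Delta=-\div\nabla$ is precisely what makes the eigenvalue equation $\Delta\eta=2\Lambda\eta$ consistent with $\lambda_{1}=2\Lambda>0$. The only point requiring care is ensuring the powers $\eta^{p},\eta^{q}$ are genuinely smooth, so that Green's formula applies without boundary contributions; in the intended application of \eqref{evquot} the exponents are positive integers and $\eta$ is a smooth (affine-linear) first eigenfunction, so this causes no difficulty.
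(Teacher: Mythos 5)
Your proof is correct and follows essentially the same route as the paper: both arguments rest on the pointwise chain-rule identity $\langle\nabla\eta^{p},\nabla\eta^{q}\rangle=pq\,\eta^{p+q-2}\abs{\nabla\eta}^{2}$ followed by a single application of Green's formula on the closed manifold, with the hypothesis $p+q\neq 1$ used only to divide by $p+q-1$. The only cosmetic difference is that you integrate the pair $(\eta^{p+q-1},\eta)$ by parts directly, whereas the paper pairs $\eta^{p}$ with $\Delta\eta^{q}$ and cancels the $\abs{\nabla\eta}^{2}$ terms algebraically before integrating; your version is if anything slightly more streamlined.
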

\begin{proof}
We first note the pointwise formulae
$$\langle \nabla \eta^{p},\nabla \eta^{q}\rangle = pq \eta^{p+q-2}|\nabla \eta|^{2}$$
and
$$\eta^{p}\Delta \eta^{q} = q\eta^{p+q-1}\Delta \eta-q(q-1)\eta^{p+q-2}|\nabla \eta|^{2}.$$
Hence
$$p\eta^{p}\Delta \eta^{q}+(q-1)\langle \nabla \eta^{p},\nabla \eta^{q}\rangle = pq\eta^{p+q-1}\Delta \eta.$$
The result follows from integrating by parts.
\end{proof}
Given this result we can now prove Theorem \ref{main}.
\begin{proof} (of Theorem \ref{main})

If $\{x_{1},...,x_{n}\}$ are the coordinates satisfying conditions (1) and (2) in the hypothesis then this is the same as saying 
$$\{x_{1},...,x_{n},\frac{1}{\sqrt{Vol(P)}}\}$$
is an orthonormal basis of $E_{1}\oplus \mathbb{R} \subset L^{2}(M)$. The projection of the quadratic ${\phi^{2}=\left(\sum_{i=1}^{i=n}a_{i}x_{i}\right)^{2}}$ to the orthogonal complement of this space is given by
$$\Phi = \phi^{2}-\sum_{i=1}^{i=n}\langle x_{i}.\phi^{2}\rangle_{L^{2}}\cdot x_{i}-\frac{\langle 1, \phi^{2} \rangle}{Vol(P)}.$$
Straightforward calculation yields 
\begin{multline*}\|\nabla \Phi\|^{2}_{L^{2}} = \|\nabla \phi^{2}\|^{2}_{L^{2}}-2\sum_{i=1}^{i=n}\langle x_{i},\phi^{2}\rangle_{L^{2}}\langle x_{i},\nabla \phi^{2}\rangle_{L^{2}}\\+\sum_{i,j}\langle x_{i},\phi^{2}\rangle_{L^{2}}\langle x_{j},\phi^{2}\rangle_{L^{2}}\langle \nabla x_{i},\nabla x_{j}\rangle_{L^{2}}.\end{multline*}
One can evaluate the integrals $\langle \nabla x_{i}, \nabla \phi^{2}\rangle_{L^{2}}$ and $\langle \nabla x_{i},\nabla x_{j}\rangle_{L^{2}}$ by integration by parts; hence
$$\|\nabla \Phi\|^{2}_{L^{2}} = \|\nabla \phi^{2}\|^{2}_{L^{2}}-2\Lambda\sum_{i=1}^{i=n}\langle x_{i}, \phi^{2}\rangle_{L^{2}}^{2}.$$
The result follows from using Proposition \ref{IBP} and taking the infimum over all possible choices of $(a_{1},...,a_{n})\neq 0$.
\end{proof}
\section{Examples}
\subsection{$\mathbb{CP}^{1}$}
This is the only Fano manifold in complex dimension 1 and the K\"ahler-Einstein metric is the Fubini-Study metric (a.k.a. the round metric). The moment polytope is the line $[-1,1]$ given by the affine linear inequalities 
$$x+1\geq 0 \textrm{ and } 1-x \geq 0.$$
This corresponds to the case with $\Ric (g)=g$ i.e. $\Lambda=1$. 
Straightforward calculation yields
$$\Phi =\frac{ a^{2}}{2}(3x^{2}-1).$$
Hence
$$\|\Phi\|^{2}_{L^{2}} = \frac{2a^{4}}{5}$$
and
$$\|\nabla \Phi\|^{2}_{L^{2}}=\frac{12a^{4}}{5}.$$
Hence $\lambda_{2}\leq 6$. As the Fubini-Study metric is explicit it is well-known that $\lambda_{2}=6$.  When $a=1$, the function $\Phi$ is the Legendre polynomial $\mathcal{P}_{2}(x)$.
\subsection{$\mathbb{CP}^{2}$ }
The K\"ahler-Einstein metric in this case is the Fubini-Study metric which is known explicitly. The moment polytope is the triangle with vertices at $(-1,-1), (2,-1)$ and $(-1,2)$. This corresponds to the normalisation ${\Lambda=1}$. The affine linear functions defining the polytope are
$$x_{1}+1\geq 0,\ \ \ x_{2}+1\geq 0 \textrm{ and } 1-x_{1}-x_{2} \geq 0.$$ 
We form  the orthonormal system of eigenfunctions
$$\tilde{x}_{1} = \frac{2}{3}x_{1}, \ \ \tilde{x}_{2} =\frac{4}{3\sqrt{3}}\left(x_{2}+\frac{x_{1}}{2}\right). $$
The projection of the quadratic $(a\tilde{x}_{1}+b\tilde{x}_{2})^{2}$ is given by
$$\Phi = (a\tilde{x}_{1}+b\tilde{x}_{2})^{2}-\frac{8(a^{2}-b^{2})}{30}\tilde{x}_{1}+\frac{8ab}{15}\tilde{x}_{2}-\frac{2(a^{2}+b^{2})}{9}.$$
We can then calculate
$$\|\Phi\|^{2}_{L^{2}}= \frac{6}{25}(a^{2}+b^{2})^{2}$$
and
$$\|\nabla \Phi\|^{2}_{L^{2}} = \frac{32}{25}(a^{2}+b^{2})^{2}.$$
Hence $\lambda_{2} \leq \frac{16}{3}$.  As with $\mathbb{CP}^{1}$, this is the exact value of $\lambda_{2}$ in this case and $\Phi$ is the associated eigenfunction.
\subsection{$\mathbb{CP}^{1}\times\mathbb{CP}^{1}$}
The K\"ahler-Einstein metric in this case is the product of the Fubini-Study metric on each factor. The moment polytope is the square with vertices at $(-1,-1), (1,-1), (-1,1)$ and $(1,1)$. The affine linear functions defining the polytope are
$$x_{1}+1\geq 0,\ \ \ x_{2}+1\geq 0, \ \ \ 1-x_{1}\geq 0 \textrm{ and } 1-x_{2} \geq 0.$$ 
We form  the orthonormal system of eigenfunctions
$$\tilde{x}_{1} = \frac{\sqrt{3}}{2}x_{1}, \ \ \ \tilde{x}_{2}=\frac{\sqrt{3}}{2}x_{2}.$$ 
The projection of the quadratic is given by
$$\Phi = (a\tilde{x}_{1}+b\tilde{x}_{2})^{2}-\frac{1}{4}(a^{2}+b^{2}).$$
Calculation yields
$$\|\Phi\|^{2}_{L^{2}} =\frac{9}{20}(a^{4}+\frac{10}{3}a^{2}b^{2}+b^{4})-\frac{1}{4}(a^{2}+b^{2})^{2}=\frac{1}{5}(a^{4}+5a^{2}b^{2}+b^{4}). $$
and
$$\|\nabla \Phi\|^{2}_{L^{2}} = \frac{8}{3}(\frac{9}{20}(a^{4}+\frac{10}{3}a^{2}b^{2}+b^{4})).$$
So the ratio is given by
$$\frac{\|\nabla \Phi \|^{2}_{L^{2}}}{\|\Phi\|^{2}_{L^{2}}}=\frac{6a^{4}+20a^{2}b^{2}+6b^{4}}{a^{4}+5a^{2}b^{2}+b^{4}}.$$
This function has a global minima of $\frac{32}{7} \approx 4.6 $ when $a=b$.  The true value of $\lambda_{2}$ for this manifold is $4$.  The eigenfunction in this case is $x_{1}x_{2}$.
\subsection{ $\mathbb{CP}^{2}\sharp3\overline{\mathbb{CP}}^{2}$}
This is the first non-trivial application of the method.  The K\"ahler-Einstein metric on this manifold was first shown to exist by Siu \cite{Siu} but it is not known explicitly. The moment polytope is a hexagon with vertices at $(1,0), (0,1), (-1,1), (-1,0), (0,-1)$ and $(1,-1)$. This corresponds the normalisation $\Lambda=1$. The affine linear functions defining the hexagon are 
$$ x_{i}+1 \geq 0, \ \ \ 1-x_{i}\geq 0 \textrm{ for } i=1,2, $$ 
and
$$1-x_{1}-x_{2}\geq 0 \textrm{ and } 1+x_{1}+x_{2}\geq 0.$$
The orthonormal representatives of the first eigenspace are
$$ \tilde{x}_{1}=\sqrt{\frac{6}{5}}x_{1} \textrm{ and } \tilde{x}_{2} = \sqrt{\frac{8}{5}}\left(x_{2}+\frac{1}{2}x_{1}\right).$$
The projection of the quadratic is given by
$$\Phi = (a\tilde{x}_{1}+b\tilde{x}_{2})^{2}-\frac{1}{3}(a^{2}+b^{2}).$$
Calculation yields 
$$\|\Phi\|^{2} = \frac{127}{375}(a^{2}+b^{2})^{2}$$
%= \frac{84}{125}(a^{2}+b^{2})^{2}-\frac{1}{3}(a^{2}+b^{2})^{2} 
and 
$$\|\nabla \Phi\|^{2} = \frac{672}{375}(a^{2}+b^{2})^{2}.$$
Hence we obtain an estimate $\lambda_{2} \leq \frac{672}{127} \approx 5.29$. 

In order to check the accuracy of this bound we can use an approximation of the metric in the Rayleigh-Ritz method.  Briefly, the Rayleigh-Ritz approximation method involves taking a set of test functions $S=\{\psi_{1},...,\psi_{N}\}$ and forming the two matrices
$$A_{ij}  =\langle \nabla \psi_{i}, \nabla \psi_{j}\rangle_{L^{2}} \textrm{ and } B_{ij} = \langle \psi_{i}, \psi_{j}\rangle_{L^{2}}.$$
Providing the test functions from a complete spanning set as $N\rightarrow \infty$, the spectrum  of $M=B^{-1}A$ converges to that of the Laplacian.

The metric is invariant under an action of $D_{6}$ and so it makes sense to try and expand the symplectic potential as a series of $D_{6}$-invariant polynomials. Doran et. al. \cite{DoranC} give an approximation of the Siu metric in terms of the functions $U$ and $V$ where
$$U=x_{1}^{2}+x_{1}x_{2}+x_{2}^{2} \textrm{ and } V= x_{1}^{2}x_{2}^{2}(x_{1}+x_{2})^{2}.$$
They use the symplectic potential given by
$$g = g_{can}-0.22412U-0.01450U^{2}-0.00521U^{3}+0.00734V,$$
this yields a metric that satisfies the Einstein condition pointwise to better than $10\%$ (and the global rms error is $0.007$). Using this approximation to the metric and the set 
$$S=\{1, x_{1}, x_{2}, x_{1}^{2}, x_{2}^{2}, x_{1}x_{2}\},$$
the Rayleigh-Ritz approximation is to $4$ d.p.
$$0,1.9986, 2.0003, 4.7548, 4.7625, 6.3288.$$
This suggests that there is an eigenvalue around $4.75$ of multiplicity $2$.  We note that Doran et. al. gave $6.32$ as being approximately the first eigenvalue of the $D_{6}$-invariant functions. 

\subsection{Complex dimension 3}
In complex dimension 3 there are only two toric K\"ahler-Einstein Fano manifolds that do not come from the product of $\mathbb{CP}^{1}$ and a Fano surface.  They are $\mathbb{CP}^{3}$ and the projectivisation of the rank two bundle $\mathcal{O} \oplus \mathcal{O}(1,-1)$ over $\mathbb{CP}^{1}\times \mathbb{CP}^{1}$. The value of $\lambda_{2}$ for $\mathbb{CP}^{3}$ is well known to be $5$ and our method will achieve this as there is a second eigenfunction that is a quadratic in a first eigenfunction. 

Now we consider ${\mathbb{P}(\mathcal{O} \oplus \mathcal{O}(1,-1))_{\mathbb{CP}^{1}\times\mathbb{CP}^{1}}}$.  The K\"ahler-Einstein metric on this manifold was shown to exist by Sakane \cite{Sak}, though it was not given explicitly. The manifold admits a cohomogeneity-one action and so it is possible to write down the symplectic potential explicitly. This was done by Dammerman in his D.Phil. thesis \cite{Dam}. This manifold can also be realised as the blow-up of $\mathbb{CP}^{3}$ at two skew lines.  This yields a moment polytope which is a tetrahedron sawn-off at two skew edges. The moment polytope for this manifold is given by the linear inequalities
$$  x_{i}+1 \geq 0, \textrm{ for } i=1,2,3$$
and
$$1-x_{1} \geq 0, \ \ \ 1-x_{1}-x_{2} \geq 0 \textrm{ and } 1+x_{1}-x_{3} \geq 0.$$
The orthonormal basis is:
$$ \tilde{x}_{1}=\sqrt{\frac{15}{34}}x_1, \ \ \ \tilde{x}_{2}=\sqrt{\frac{30}{79}}\left(x_{2}+\frac{1}{2}x_{1}\right), \ \ \ \tilde{x}_{3}=\sqrt{\frac{30}{79}}\left(x_{3}-\frac{1}{2}x_{1}\right).$$
This yields the projection of the quadratic
\begin{multline*}\Phi = (a\tilde{x}_{1}+b\tilde{x}_{2}+c\tilde{x}_{3})^{2}+ 0.1906(b^{2}-c^{2})\tilde{x}_{1}+0.3812ab\tilde{x}_{2}\\-0.3812ac\tilde{x}_{3}-\frac{3}{22}(a^{2}+b^{2}+c^{2}).\end{multline*}
A numerical check yields that the optimal bound appears to be when a=b=c, this gives the result (to 4 d.p.)
$$\|\Phi\|_{L^{2}}^{2} \approx 1.7050 $$
and
$$\|\nabla \Phi\|^{2}_{L^{2}} \approx 5.9676,$$
hence an upper bound $\lambda_{2} \leq 4.7011$.
In his thesis, Dammerman showed that the symplectic potential for this metric is given by
\begin{multline*} \psi_{can} +\frac{1}{2}[(x-2)\log (x-2) +(-x-2)\log(-x-2)\\+(1+\frac{\sqrt{7}}{7}x)\log (1+\frac{\sqrt{7}}{7}x)+(1-\frac{\sqrt{7}}{7}x)\log (1-\frac{\sqrt{7}}{7}x)].\end{multline*}
We use the Rayleigh-Ritz method with the set
$$S=\{1,x_{1}, x_{2}, x_{3}, x_{1}^{2}, x_{1}x_{2}, x_{1}x_{3}, x_{2}^{2}, x_{2}x_{3}, x_{3}^{2}\},$$
this yields the following approximation to the spectrum (the values are given to 4 d.p.)
$$0, 1.9997, 2.0002, 2.0006, 4.3447, 4.4430, 4.4437, 5.4483, 5.4548, 5.7107.$$

\section{Koiso-Sakane K\"ahler-Einstein Metrics}
Another class of K\"ahler-Einstein manifolds that we can apply our method to are the Koiso-Sakane \cite{KoiSak} manifolds. Here we use the framework outlined by Dancer and Wang \cite{DW}. Let $(V_{i}^{2n_{i}},J_{i},h_{i}), 1\leq i \leq r$ be compact Fano, K\"ahler-Einstein manifolds of real dimension $2n_{i}$ where the first Chern class ${c_{1}(V_{i},J_{i})=p_{i}a_{i}}$, where $p_{i}$ are integers and $a_{i}\in H^{2}(V_{i}, \mathbb{Z})$ are indivisible cohomology classes. The K\"ahler-Einstein metrics are normalised so that $\Ric (h_{i}) = p_{i}h_{i}$. For ${q=(q_{1},...,q_{r})}$ with $q_{i}\in \mathbb{Z}/\{0\}$ and let $P_{q}$ be the principal $U(1)$-bundle over $V:=V_{1}\times ...\times V_{r}$ with Euler class $\sum_{i=1}^{i=r}q_{i}\pi^{\ast}_{i}a_{i}$ and $\pi_{i}$ is the projection from $V$ onto the $i$th factor.\\ 
\\
Let $\theta$ be the principal $U(1)$ connection on $P_{q}$ with curvature $\Omega=\sum_{i=1}^{i=r}  q_{i}\pi^{\ast}_{i}\omega_{i}$ where $\omega_{i}$ is the K\"ahler form of the metric $h_{i}$. For an open interval $I$, we consider metrics of the form
\begin{equation}\label{Coh1}
\alpha^{-1}ds^{2}+\alpha(s)\theta \otimes \theta +\sum_{i=1}^{i=r}\beta_{i}(s)\pi^{\ast}_{i}h_{i},
\end{equation}
on the manifold $I\times P_{q}$ where $\alpha$ and $\beta_{i}$ are smooth, non-negative functions  on $I$. One can compactify this manifold by collapsing a circle (or a higher-dimensional sphere) at each end. We refer to this manifold as $W_{q_{1},...,q_{r}}$.  In order for metrics of the form (\ref{Coh1}) to extend smoothly to the compact manifold $\alpha$ and $\beta_{i}$ must satisfy certain boundary behaviour.  We refer the reader to \cite{DW} for details here. The class of manifolds we are interested in in this section is provided by the following:
\pagebreak
\begin{theorem}[Koiso-Sakane]
Suppose $0<|q_{i}|<|p_{i}|$ for all $1\leq i \leq r$ and the integral
\begin{equation}\label{Fut}
\int_{-(n_{1}+1)}^{(n_{r}+1)}x\prod_{i=1}^{i=r}\left(x-\frac{p_{i}}{q_{i}}\right)^{n_{i}}dx=0.
\end{equation}
Then $W_{q_{1},...,q_{r}}$ admits a K\"ahler-Einstein metric.
\end{theorem}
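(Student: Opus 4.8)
The plan is to exploit the cohomogeneity-one structure to reduce the K\"ahler-Einstein equation, which is a priori a complex Monge-Amp\`ere equation, to an ordinary differential equation for the profile functions $\alpha$ and $\beta_{i}$, and then to solve the resulting boundary value problem explicitly. First I would compute the Ricci tensor of a metric of the form (\ref{Coh1}) on $I\times P_{q}$. Using that each $(V_{i},J_{i},h_{i})$ is K\"ahler-Einstein with $\Ric(h_{i})=p_{i}h_{i}$, and that the connection $\theta$ has curvature $\Omega=\sum_{i}q_{i}\pi_{i}^{\ast}\omega_{i}$, the Ricci tensor decomposes into a piece tangent to each fibre $V_{i}$, a piece along the circle direction $\theta$, and a piece along $ds$. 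Setting each equal to $\Lambda$ times the corresponding component of $g$ yields a coupled ODE system in $s$ for $\alpha$ and the $\beta_{i}$.

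The next step is to use the K\"ahler condition to decouple and simplify this system. The standard device is to pass to a moment coordinate $x$ for the isometric circle action; the K\"ahler condition then forces each $\beta_{i}$ to be affine in $x$, and after rescaling one may take $\beta_{i}$ proportional to $(p_{i}/q_{i}-x)$. The hypothesis $0<|q_{i}|<|p_{i}|$ guarantees $|p_{i}/q_{i}|>1$, so these factors stay strictly positive on the relevant range of $x$ and the fibre metrics remain positive-definite. In this coordinate the fibre volume density is proportional to the weight $F(x)=\prod_{i}|p_{i}/q_{i}-x|^{n_{i}}$, exactly the integrand in (\ref{Fut}), and the whole problem collapses to a single linear ODE for one remaining momentum profile $\phi(x)$ playing the role of $\alpha$.

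I would then integrate this reduced equation explicitly: the solution is $F(x)^{-1}$ times an antiderivative built from $x\,F(x)$ together with two integration constants. Smooth compactification requires the fibre to collapse at each pole $x=-(n_{1}+1)$ and $x=n_{r}+1$, which imposes that $\phi$ vanish at both endpoints with prescribed derivatives there (dictated by whether a circle or a higher-dimensional sphere caps off). Imposing these determines the integration constants and fixes the Einstein constant $\Lambda$, leaving a single scalar compatibility condition relating the two poles. Unwinding that condition by integrating by parts against $F$ reduces it precisely to the vanishing of the first moment $\int x\,F(x)\,dx$, that is, equation (\ref{Fut}). This is the manifestation of the vanishing of the Futaki invariant in this setting, exactly paralleling the barycentre condition in the toric case.

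The hard part will be twofold. The more technical obstacle is the Ricci computation of the first step: extracting the correct ODE from the bundle metric (\ref{Coh1}) requires carefully tracking the O'Neill-type terms coming from the connection $\theta$ and its curvature $\Omega$, and correctly feeding in $\Ric(h_{i})=p_{i}h_{i}$. The more conceptual obstacle is to verify that the explicit $\phi$ produced under (\ref{Fut}) is strictly positive on the open interval, so that it genuinely defines a Riemannian metric rather than a degenerate one. This I would establish by showing that the integrand defining the primitive $\phi F$ changes sign exactly once, from positive to negative, so that $\phi F$ rises from its zero at the left pole to a single interior maximum and falls back to zero at the right pole, remaining positive throughout; the condition (\ref{Fut}) is precisely what forces the primitive to return to zero at the right endpoint, closing up the construction.
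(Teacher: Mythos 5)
The paper does not prove this theorem: it is quoted from Koiso--Sakane \cite{KoiSak} (see also \cite{Sak} and the Dancer--Wang framework \cite{DW}), and the only trace of its proof in the paper is the explicit output of the construction, namely $\beta_{i}(s)=-q_{i}(s+\sigma_{i})$ and $\alpha$ equal to $\prod_{i}(s+\sigma_{i})^{-n_{i}}$ times the primitive of $(2n_{1}+2-x)\prod_{i}(x+\sigma_{i})^{n_{i}}$. Your sketch is exactly that standard construction: affine $\beta_{i}$ in the moment coordinate, a single linear ODE for the remaining profile solved by weighting with $F(x)=\prod_{i}\lvert p_{i}/q_{i}-x\rvert^{n_{i}}$, the first-moment condition (\ref{Fut}) as the compatibility/Futaki condition closing the primitive at the second pole, and positivity from the single sign change of the linear factor in the integrand. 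So your approach agrees with the source the paper relies on, and the formulas it records are precisely what your argument would produce.
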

The integral is the Futaki invariant of the holomorphic vector field $\partial_{s}$, hence, as in the toric case, the only obstruction is classical. In this case the functions $\alpha$ and $\beta_{i}$ are given by
$$\alpha =\frac{1}{\prod_{i=1}^{i=r}\left( s+\sigma_{i}\right)^{n_{i}}}\int_{0}^{s}(2n_{1}+2-x) \prod_{i=1}^{i=r}\left(x+\sigma_{i}\right)^{n_{i}}dx,$$
and,
$$\beta_{i}(s)=-q_{i}\left(s+\sigma_{i}\right),$$
where
$$ \sigma_{i} = -2n_{1}-2-\frac{2p_{i}}{q_{i}}.$$
In the case that the factors in the base are homogeneous the manifolds are also toric, however non-toric examples do exist. The $U(1)$ action on the bundle $P_{q}$ lifts to a holomorphic $U(1)$ action on $W_{q_{1},...q_{r}}$ and so by the Lichnerowicz-Matsushima theorem, these manifolds all have $\lambda_{1}=2\Lambda$.  Furthermore, the potential function for the holomorphic vector field induced by the $U(1)$ action is just the coordinate $s$.  Equation (\ref{Fut}) is essentially equivalent to the requirement that $s-2n_{1}-2$ has integral $0$ and is the first non-constant eigenfunction for the Laplacian.
\begin{theorem}
Let $-(n_{1}+1)q_{i}<p_{i}$ and $(n_{r}+1)q_{i}<p_{i}$ for $2 \leq i \leq r-1$ and suppose that equation (\ref{Fut}) is satisfied, then the second non-zero eigenvalue of the Koiso-Sakane metric satisfies
$$\lambda_{2} \leq \frac{8\Lambda}{3}+ \frac{2\Lambda}{3}\left(\frac{ \frac{I_{3}^{2}}{I_{2}^{2}}+\frac{4}{I_{0}}}{\frac{I_{4}}{I_{2}^{2}}-\frac{I_{3}^{2}}{I_{2}^{2}}-\frac{1}{I_{0}}}\right),$$
where
$$I_{k} = \int_{-(n_{1}+1)}^{(n_{r}+1)}x^{k}\prod_{i=1}^{i=r}\left|\frac{p_{i}}{q_{i}}-x\right|^{n_{i}}dx.$$
\end{theorem}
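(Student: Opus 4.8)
The plan is to run the argument of Theorem~\ref{main} with a single distinguished first eigenfunction, and to reduce every integral that appears to a one-dimensional moment of the measure $d\mu=\prod_{i}\bigl|\tfrac{p_i}{q_i}-x\bigr|^{n_i}\,dx$ on $[-(n_1+1),\,n_r+1]$. First I would identify the relevant eigenfunction. By the discussion following (\ref{Fut}), the Killing potential of the circle action is the coordinate $s$, and Proposition~\ref{Mat} (Lichnerowicz--Matsushima) tells us that $s-2n_1-2$, once shown to have zero mean, is a $\lambda_1=2\Lambda$ eigenfunction. Setting $x=\tfrac12(s-2n_1-2)$ rescales this to $x$, and condition (\ref{Fut}) is exactly the balancing relation $\int x\,d\mu=I_1=0$, so $x$ is mean-zero. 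The test function is then $\phi=ax$ for a nonzero scalar $a$, and I would project $\phi^2$ onto $(E_1\oplus\mathbb{R})^{\perp}$ as in the proof of Theorem~\ref{main}.

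The second step is the reduction to the $I_k$. For the metric (\ref{Coh1}) the two length contributions $\alpha^{-1/2}$ and $\alpha^{1/2}$ from the $ds$- and $\theta$-directions cancel, so the volume form equals $\prod_i\beta_i^{n_i}\,ds$ times the fixed volumes of the fibre and of the base factors. Using $\beta_i(s)=-q_i(s+\sigma_i)$ with $\sigma_i=-2n_1-2-2p_i/q_i$, the substitution $s=2(x+n_1+1)$ gives $s+\sigma_i=2\bigl(x-\tfrac{p_i}{q_i}\bigr)$, so $\prod_i\beta_i^{n_i}\,ds$ becomes a constant multiple of $\prod_i\bigl|\tfrac{p_i}{q_i}-x\bigr|^{n_i}\,dx$ on $[-(n_1+1),\,n_r+1]$. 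Hence the $L^2$ inner product of any two functions of $s$ is a common constant times $\int(\cdot)\,d\mu$, so $\langle x^k,x^l\rangle_{L^2}$ is proportional to $I_{k+l}$ and that constant cancels in every ratio; the hypotheses $-(n_1+1)q_i<p_i$ and $(n_r+1)q_i<p_i$ for $2\le i\le r-1$ ensure that $\tfrac{p_i}{q_i}-x$ has no interior zero for the middle factors, so $d\mu$ is a genuine smooth positive measure with finite moments.

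The third step is to check that no first eigenfunction other than a multiple of $x$ enters the projection. Any other element of $E_1$ comes from a holomorphic vector field on a base factor $V_i$, hence is the pullback of a mean-zero eigenfunction on $V_i$; since the volume form factorises and $\phi^2$ depends only on $s$, integration over $V_i$ annihilates $\langle\phi^2,\cdot\rangle_{L^2}$. Thus projecting $\phi^2$ onto $(E_1\oplus\mathbb{R})^{\perp}$ removes only its components along $1$ and along $x$, which is precisely the single-coordinate case of Theorem~\ref{main}. Applying that theorem with $n=1$ and the orthonormal eigenfunction $x/\|x\|_{L^2}$, the quantities $\langle x,\phi^2\rangle_{L^2}$, $\|\phi^2\|_{L^2}^2$, $\|\phi\|_{L^2}^2$ and $\mathrm{Vol}$ turn into the moments $I_3$, $I_4$, $I_2$ and $I_0$, while the infimum over the single parameter $a$ is vacuous by scale-invariance. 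Using Proposition~\ref{IBP} with $p=q=2$, which gives $\|\nabla\phi^2\|_{L^2}^2=\tfrac{8\Lambda}{3}\|\phi^2\|_{L^2}^2$, together with the Pythagorean identity for $\|\Phi\|_{L^2}^2$, these moments assemble into the ratio of $I_k$ displayed in the statement.

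The main obstacle I anticipate is the third step: correctly describing $E_1$ and verifying the orthogonality that lets the base eigenfunctions drop out, so that the multi-coordinate bound of Theorem~\ref{main} collapses to a single moment ratio. The volume-form computation of the second step is routine once the cancellation of the $\alpha$-factors is noted, and the final substitution is bookkeeping; but the claim that $\phi^2$ is orthogonal to the whole of $E_1\ominus\mathbb{R}x$ rests on the bundle structure of (\ref{Coh1}) and on $\phi^2$ being invariant under both the circle action and the base isometries, and this is the point that needs the most care.
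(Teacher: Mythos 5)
Your proposal is correct and follows essentially the same route as the paper, whose entire proof is the single sentence that the result ``follows as before by taking the projection of $s^{2}$ to the orthogonal complement of $\{1,s\}$ and then using the integration-by-parts formula to evaluate the integrals in the Rayleigh quotient'' --- i.e.\ the specialisation of Theorem \ref{main} to the one distinguished eigenfunction $s$, with all inner products reduced to the moments $I_{k}$ via the volume form $\prod_{i}\beta_{i}^{n_{i}}\,ds$. The two points you elaborate --- the cancellation of the $\alpha$-factors in the volume form and, more importantly, the orthogonality of $\phi^{2}$ to the rest of $E_{1}$ (so that projecting off $\{1,s\}$ alone suffices in the variational characterisation of $\lambda_{2}$) --- are exactly the details the paper leaves implicit, and your treatment of them is sound.
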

\begin{proof}
This follows as before by taking the projection of $s^{2}$ to the orthogonal complement of $\{1,s\}$ and then using the integration-by-parts formula to evaluate the integrals in the Rayleigh quotient.
\end{proof}
\subsection{Examples}
The 3-fold $\mathbb{P}(\mathcal{O}\oplus \mathcal{O}(1,-1))$ was orginally realised as a Koiso-Sakane manifold. The data in this case are $n_{1}=n_{4}=0$, $n_{2}=n_{3}=1$, $p_{2}=p_{3}=2$ and $q_{2}=-q_{3}=1$. Here the integrals are given by
$$I_{0} = \frac{22}{3}, \ I_{2} = \frac{34}{15}, \ I_{3}=0, \ I_{4} = \frac{46}{35}.$$
Hence we obtain the bound:
$$ \lambda_{2} \leq \frac{2530}{443} \approx 5.711.$$
We note that this bound is not as good as the one obtained in the toric theorem. This is to be expected as the bound involves a function invariant under a strictly larger symmetry group. It is reassuring that this bound occurs as the largest eigenvalue of the Rayleigh-Ritz matrix computed using Dammerman's symplectic potential.

The example above can easily be generalised bytaking the product of any two projective spaces of the same dimension $N$ and forming manifold $W_{q,-q}$ for $0<q<N+1$. The various bounds given by Theorem \ref{main2} are given below:
\begin{center}
\begin{table}[h]
\begin{tabular}{|c|c|c|} 
\hline
\textbf{N} & \textbf{q} & \textbf{Bound for $\lambda_{2}$} \\
\hline
2 & 1 & 5.7526\\
\hline
2 & 2 & 5.1136 \\
\hline
3  & 1 & 5.7924\\ 
\hline
3 & 2 & 5.2549\\  
\hline
3 & 3 & 4.6750\\
\hline
\end{tabular}
\caption{Bounds for $\lambda_{2}$ of Koiso-Sakane manifolds}
\end{table}
\end{center}
\section{Further Directions}
\subsection{Higher order estimates}
One could use any polynomial in a representative of the first eigenspace to generate an estimate, though general considerations would suggest that one could not really improve on the quadratic bound very much by doing this.  Indeed, in the case of $\mathbb{CP}^{n}$ and products of projective spaces there would be no improvement from doing this. For $\mathbb{CP}^{2}\sharp3\overline{\mathbb{CP}}^{2}$ a numerical check using the Rayleigh-Ritz method does not yield a significant improvement for example,  a quintic bound is given by $\lambda_2 \leq 5.287$.
\subsection{Theoretical considerations}
Finding the quadratic (and higher degree) bounds can be phrased in terms of a very general optimisation problem.  Given any compact subset $C$ of $\mathbb{R}^{n}$ with $Volume(C)>0$ then one can form an orthonormal basis out of the coordinate functions. Choosing a representative $\eta$  one can find the projection of $\eta^{2}$ onto the orthogonal complement of the coordinates  (denoted again by $\Phi$) and try to find the representative that minimises the quantity
$$ \frac{\sum_{i=1}^{i=n}\langle x_{i},\eta^{2}\rangle^{2} + 4\frac{\langle 1,\eta^{2}\rangle^{2}}{Vol(C)}}{\|\Phi\|^{2}_{L^{2}}}. $$
For example if $C=\mathbb{D}^{2}$ is the unit disc then
$$\tilde{x}_{1} = \frac{2}{\sqrt{\pi}}x_{1}, \ \ \ \tilde{x}_{2} = \frac{2}{\sqrt{\pi}}x_{2},$$
are orthonormal and the projection of the quadratic ${(a\tilde{x}_1+b\tilde{x}_{2})^{2}}$ onto the orthogonal complement is given by
$$\Phi = (a\tilde{x}_{1}+b\tilde{x}_{2})^{2}-\frac{1}{\pi}(a^{2}+b^{2}).$$
Hence
$$\|\Phi\|^{2}_{L^{2}} = \frac{1}{\pi}(a^{2}+b^{2})^{2}. $$
If we imagined that the disc was the moment polytope of a toric K\"ahler-Einstein manifold this would yield an upper bound of $\frac{16\Lambda}{3}$ for $\lambda_{2}$. 

It might be possible to prove that there is a universal upper bound (depending only upon the dimension $n$) to this general problem and so a universal upper bound to $\lambda_{2}$ for any toric K\"ahler-Einstein manifold. If we denote by $\lambda_{2}(\mathbb{CP}^{n})$ the second eigenvalue of the Fubini-Study metric on complex projective space then the numerical work certainly gives evidence for the following:

\emph{Conjecture 1:} Let $(M^{n},g,J)$ be a toric K\"ahler-Einstein manifold. Then $$\lambda_{2}(M) \leq \lambda_{2}(\mathbb{CP}^{n}).$$
In fact, perhaps a stronger result is true.

\emph{Conjecture 2:} Let $(M^{n},g,J)$ be a toric K\"ahler-Einstein manifold. Then the
bound given in Theorem 1.1 is less than or equal to $\lambda_{2}(\mathbb{CP}^{n})$.

There has been interest in to what extent the spectrum of a toric-K\"ahler manifold determines the polytope. We refer the reader to the recent work of Dryden, Guillemin and Sena-Dias \cite{Dry} for results in this direction. In particular, they suggest it might be possible that the equivariant spectrum of toric K\"ahler-Einstein metrics might uniquely determine the polytope and hence the manifold.  Any product metric will have $\lambda_{2}=4$ but as the polytopes for toric K\"ahler-Einstein manifolds are very restricted in the set of Delzant polytopes one possibility might be that a non-product, toric K\"ahler-Einstein manifold has polytope uniquely determined by $\lambda_{2}$. In other words, each non-product, toric K\"ahler-Einstein metric (normalised to have Einstein constant 1) has a unique value $\lambda_{2}$ associated to it.

For the Koiso-Sakane manifolds, one could consider the problem of bounding the expression for the bound in terms of integrals involving an arbitrary measure $\mu$ on $[-1,1]$ with center of mass $0$ (i.e. $\int_{-1}^{1} x d\mu=0$). This problem has no universal bound as can be seen by taking the limit as $\epsilon \rightarrow 0$ of  $\mu = V(\epsilon)e^{(1-x^{2})/\epsilon}dx$, where $V(\epsilon)$ is a constant to make $\mu$ have unit mass. The numerical work seems to suggest that $6 = \lambda_{2}(\mathbb{CP}^{1})$ is a universal upper bound for the manifolds $W_{q,-q}$.

\subsection{Other geometries}
Techniques similar to those described above might also be useful in bounding the first (or low lying) eigenvalues of other geometries that are of interest to mathematicians and physicists. For example, in \cite{HM} similar techniques are used to obtain bounds on the first eigenvalue of the Page metric and Chen-LeBrun-Weber metric, both Einstein metrics on $\mathbb{CP}^{2}\sharp\overline{\mathbb{CP}}^{2}$ and $\mathbb{CP}^{2}\sharp 2\overline{\mathbb{CP}}^{2}$ respectively. All one needs is the explicit knowledge of a function that is the solution to an equation involving the Laplacian and explicit knowledge of the volume form.


\begin{thebibliography}{999999}

\bibitem{Abr} M. Abreu, \textit{K\"ahler geometry of toric varieties and extremal metrics}, Internat. J. Math. 9, no. 6, (1998), 641--651.

\bibitem{AbrFr} M. Abreu, P. Freitas, \textit{On the invariant spectrum of ${S}^{1}$-invariant metrics on $\mathbb{S}^{2}$}, Proc. London Math. Soc., 84, no. 1, (2002), 213--230.  

\bibitem{Be} A. Besse, \textit{Einstein Manifolds}, Springer Classics in Mathematics, 2008.

\bibitem{Braun} V. Braun, T. Brelidze, M. Douglas, B. Ovrut, \textit{Eigenvalues and eigenfunctions of the scalar Laplace operator on Calabi-Yau manifolds}, J. High Energy Phys., 120, no.7. (2008).

\bibitem{Dam} B. Dammerman, Metrics of special curvature with symmetry, DPhil thesis, University of Oxford, (2006), arXiv:math/0610738v1.

\bibitem{DW} A. Dancer, M. Wang, On Ricci solitons of Cohomogeneity one, Ann. Global Anal. Geom., 39, no. 3, (2011), 259--292. 

\bibitem{Donum}S. K. Donaldson, Some numerical results in complex differential geometry, Pure Appl. Math. Q. 5, no. 2, (2009), 571--618.

\bibitem{DoranC} C. Doran, M. Headrick, C. Herzog, J. Kantor, T. Wiseman, Numerical K\"ahler-Einstein metric on the third del Pezzo, Comm. Math. Phys., 282, no. 2, (2008), 357--393.

\bibitem{Dry} E. Dryden, V. Guillemin, R. Senas-Dias, Hearing Delzant polytopes from the equivariant spectrum, Trans. Amer. Math. Soc., 364, no. 2, (2012), 887--910.

\bibitem{Gui} V. Guillemin, Kaehler structures on toric varieties, J. Differential Geom., 40, no. 2, (1994), 285--309.

\bibitem{HM} S. J. Hall, T. Murphy, On the spectrum of the Page and Chen-LeBrun-Weber metrics, to appear in Ann. Global Anal. Geom..

\bibitem{HW} M. Headrick, T. Wiseman, Numerical {R}icci-flat metrics on {$K3$}, Classical Quantum Gravity, 22, no. 3, (2005), 4931--4960.

\bibitem{Kel} J. Keller, Ricci iterations on {K}\"ahler classes, J. Inst. Math. Jussieu, 8, no. 4, (2009), 743--768.

\bibitem{KoiSak} N. Koiso, Y. Sakane, Nonhomogeneous K\"ahler-Einstein metrics on compact complex manifolds II., Osaka J. Math, 25, no. 4, (1988), 933--959 

\bibitem{Mat} Y. Matsushima, \textit{Remarks on K\"ahler-Einstein manifolds}, Nagoya Math. J., 46, (1972), 161--173.  

\bibitem{Sak} Y. Sakane, Examples of compact Einstein K\"ahler manifolds with positive Ricci tensor, Osaka Math. Journal, 23, (1986), 585--617. 

\bibitem{Siu} Y-.T. Siu, The existence of K\"ahler-Einstein metrics on manifolds with positive anti canonical line bundle and a suitable finite symmetry group, Ann. of Math (2), 127, no. 3, (1988), 585-627.

\bibitem{WZ} X. Wang , X. Zhu, K\"ahler-Ricci solitons on toric manifolds with positive first Chern class, Adv. Math., 188, no. 1, (2004), 87-103.


\end{thebibliography}
\end{document}